\newtheorem{theorem}{Theorem}[section]
\newtheorem{proposition}[theorem]{Proposition}
\newtheorem{lemma}[theorem]{Lemma}
\newtheorem{corollary}[theorem]{Corollary}
\theoremstyle{definition}
\newtheorem{example}[theorem]{Example}
\newtheorem{problem}[theorem]{Problem}
\date\today
\begin{document}

\author[P. Danchev]{Peter Danchev}
\address{Institute of Mathematics and Informatics, Bulgarian Academy of Sciences, 1113 Sofia, Bulgaria}
\email{danchev@math.bas.bg; pvdanchev@yahoo.com}

\author[M. Esfandiar]{Mehrdad Esfandiar}
\address{Department of Mathematics and Computer Science Shahed University Tehran, Iran}
\email{mehrdad.esfandiar@shahed.ac.ir}	

\author[O. Hasanzadeh]{Omid Hasanzadeh}
\address{Department of Mathematics, Tarbiat Modares University, 14115-111 Tehran Jalal AleAhmad Nasr, Iran}
\email{o.hasanzade@modares.ac.ir; hasanzadeomiid@gmail.com}

\title[Rings whose non-units elements are uniquely strongly clean]{\small Rings whose non-invertible elements are \\ uniquely strongly clean}
\keywords{unit, clean element (ring), uniquely clean element (ring), strongly clean element (ring), uniquely strongly clean element (ring)}
\subjclass[2010]{16S34, 16U60}

\maketitle

\begin{abstract}
We define and investigate in details the class of so-termed {\it GUSC} rings, that are those rings whose non-invertible elements are uniquely strongly clean. These rings are a common non-trivial generalization of the so-called {\it USC} rings, introduced by Chen-Wang-Zhou in J. Pure \& Appl. Algebra (2009), which are rings whose elements are uniquely strongly clean. These rings also properly generalize the so-named {\it GUC} rings, defined by Guo-Jiang in Bull. Transilvania Univ. Bra\c{s}ov (2023), which are rings whose non-invertible elements are uniquely clean.
\end{abstract}

\section{Introduction and Basic Concepts}

Everywhere in the present paper, let $R$ be an associative but {\it not} necessarily commutative ring having identity element, usually stated as $1$. Standardly, for such a ring $R$, the letters $U(R)$, $Nil(R)$ and $Id(R)$ are designed for the set of invertible elements (also termed as the unit group of $R$), the set of nilpotent elements and the set of idempotent elements in $R$, respectively. Likewise, $J(R)$ denotes the Jacobson radical of $R$, and $Z(R)$ denotes the center of $R$. We also write $ucn(R)$ for the set of all uniquely clean elements of $R$ as well as the ring of $n\times n$ matrices over $R$ and the ring of $n\times n$ upper triangular matrices over $R$ are denoted by $M_{n}(R)$ and $T_{n}(R)$, respectively.

Traditionally, a ring is said to be {\it abelian} if each of its idempotents is central, that is, $Id(R)\subset Z(R)$. For all other unexplained explicitly notions and notations, we refer to the classical source \cite{61} or to the cited in the bibliography research sources. For instance, consulting with \cite[Definition 3.4]{5}, we set $$ucn_{0}(R):=\{ e+j:e^{2}=e\in Z(R), j\in J(R)\}.$$

In order to share our achievements here, we now need the necessary background material as follows: Mimicking \cite{8}, an element $a$ from a ring $R$ is called {\it clean} if there exists $e\in Id(R)$ such that $a-e\in U(R)$. Then, $R$ is said to be {\it clean} if each element of $R$ is clean. In addition, $a$ is called {\it strongly clean} provided $ae=ea$ and, if each element of $R$ are strongly clean, then $R$ is said to {\it strongly clean} too.
On the other hand, imitating \cite{3}, $a\in R$ is called {\it uniquely clean} if there exists a unique $e \in Id(R)$ such that $a-e \in U(R)$. In particular, a ring $R$ is said to be {\it uniquely clean} (or, shortly, just {\it UC}) if every element in $R$ is uniquely clean.

Generalizing these concepts, in \cite{2} was defined an element $a \in R$ to be {\it uniquely strongly clean} if there exists a unique $e \in Id(R)$ such that $a-e \in U(R)$ and $ae=ea$. In particular, a ring $R$ is {\it uniquely strongly clean} (or, shortly, just {\it USC}) if each element in $R$ is uniquely strongly clean.

Continuing in a similar vein this terminology, expanding the first part of the presented above notions, in \cite{1} a ring is called a {\it generalized uniquely clean} ring (or just {\it GUC} for short) if any non-invertible element is uniquely clean.

\medskip

Our work in this article targets to extend considerably the last definition by defining and exploring the following key instrument: A ring is called a {\it generalized uniquely strongly clean} ring (or just {\it GUSC} for short) if every non-invertible element is uniquely strongly clean.

The relationships between all of these ring classes are visualized via the next diagram:

\bigskip

\begin{center}
	\begin{tikzpicture}
		\node[draw, minimum width=2cm, minimum height=1cm, text width=1.75cm, align=center]  (a) {USC};
		\node[draw, minimum width=2cm, minimum height=1cm, text width=2.5cm, align=center, right=of a](b){GUSC};
		\node[draw, minimum width=2cm, minimum height=1cm, text width=1.75cm, align=center, below=of b](c){GUC};
		\node[draw, minimum width=2cm, minimum height=1cm, text width=1.75cm, align=center, right=of b](d){Strongly clean};
		\node[draw, minimum width=2cm, minimum height=1cm, text width=1.75cm, align=center, below=of a](f){UC};
		\draw[-stealth] (a) -- (b);
		\draw[-stealth] (c) -- (b);
		\draw[-stealth] (f) -- (a);
		\draw[-stealth] (f) -- (c);
		\draw[-stealth] (b) -- (d);
	\end{tikzpicture}
\end{center}

\bigskip

Our motivating tool here is to develop a substantial study on this {\bf new} class of rings showing that their complete classification is extremely difficult and rather complicated.

\section{Examples and Main Results}

We begin our considerations by marking the following four simple but useful observations.

\begin{example}\label{exam1.7}
\begin{enumerate}
\item
Any UC ring is USC, but the converse is {\it not} true in general. In fact, the ring $T_{2}(\mathbb{Z}_{2})$ is USC that is {\it not} UC.

\item
Any UC ring is GUC, but the converse is {\it not} true in general. Indeed, the ring $\mathbb{Z}_{5}$ is GUC that is {\it not} UC.

\item
Any USC ring is GUSC, but the converse is {\it not} true in general. In fact, the ring $\mathbb{Z}_{(3)}=\left\{ \dfrac{a}{b} ~ \Big| ~ a,b \in \mathbb{Z}, (3,b)=1\right\}$ is GUSC that is {\it not} USC.

\item
Any GUC ring is GUSC, but the converse is {\it not} true in general. Indeed, the ring $M_{2}(\mathbb{Z}_{2})$ is GUSC that is {\it not} GUC.

\item
Any GUSC ring is strongly clean, but the converse is {\it not} true in general. To demonstrate that, for an arbitrary prime number $p$, let $\hat{\mathbb{Z}_{p}}$ be the ring of $p$-adic integers. In view of \cite[Theorem 2.4]{10}, the ring $M_{2}(\hat{\mathbb{Z}}_{p})$ is strongly clean which is manifestly {\it not} USC in virtue of \cite[Corollary 7]{2}. Also, a plain check shows that it is too {\it not} GUSC.
\end{enumerate}
\end{example}

We, furthermore, continue with a series of helpful technicalities.

\begin{lemma}\label{lem2.1}
An element $a$ in a ring $R$ is USC if, and only if, so is $1-a$ in $R$.
\end{lemma}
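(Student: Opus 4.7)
The plan is to prove the equivalence by exhibiting a natural bijection between the data witnessing that $a$ is USC and the data witnessing that $1-a$ is USC. Specifically, if $e$ is an idempotent with $a-e\in U(R)$ and $ae=ea$, I would show that $f:=1-e$ is an idempotent serving the analogous role for $1-a$, and vice versa; the map $e\mapsto 1-e$ is an involution on $\mathrm{Id}(R)$, so uniqueness transfers along it.

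First I would observe that if $e^{2}=e$, then $(1-e)^{2}=1-e$, so $f=1-e$ is indeed an idempotent. Second, I would compute
\[
(1-a)-(1-e)=e-a=-(a-e),
\]
so $(1-a)-f\in U(R)$ iff $a-e\in U(R)$. Third, I would check the commutation: $(1-a)(1-e)=1-a-e+ae$ and $(1-e)(1-a)=1-e-a+ea$, and these agree exactly when $ae=ea$. This shows that $e$ witnesses USC-ness of $a$ iff $1-e$ witnesses USC-ness of $1-a$.

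For the uniqueness transfer, suppose $a$ is USC with unique witnessing idempotent $e$, and suppose $f'$ is any idempotent with $(1-a)-f'\in U(R)$ and $(1-a)f'=f'(1-a)$. Setting $e':=1-f'$, the three bullets above applied in reverse yield $(e')^{2}=e'$, $a-e'\in U(R)$, and $ae'=e'a$. By uniqueness of $e$ for $a$, we conclude $e'=e$, hence $f'=1-e$, proving uniqueness of the witness for $1-a$. The reverse implication follows by symmetry, since $1-(1-a)=a$.

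I do not anticipate any real obstacle here: the only point requiring minor care is making sure the bookkeeping for the commutation condition is symmetric, so that both directions of the equivalence go through identically. Everything else is a one-line algebraic manipulation.
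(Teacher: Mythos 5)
Your proof is correct and is precisely the standard argument the paper leaves implicit behind its one-word proof (``Straightforward''): the involution $e\mapsto 1-e$ on idempotents transports witnesses, since $(1-a)-(1-e)=-(a-e)$ and the commutation condition is symmetric under this substitution. Nothing further is needed.
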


\begin{proof} Straightforward.
\end{proof}

\begin{lemma}\label{lem2.2}
If $R$ is a GUSC ring, then $R$ is strongly clean.
\end{lemma}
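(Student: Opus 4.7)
The plan is very short because the claim reduces to a triviality once you notice what GUSC leaves unconstrained. By definition, GUSC only controls the non-invertible elements of $R$: each such element $a$ is uniquely strongly clean and, in particular, strongly clean. To conclude that $R$ is strongly clean, it therefore suffices to exhibit a strongly clean decomposition for every unit $u \in U(R)$.

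For this, I would simply take $u = 0 + u$. The element $0$ is an idempotent (since $0^2 = 0$), $u$ is by assumption a unit, and the commutativity requirement $0 \cdot u = u \cdot 0$ holds trivially. Hence every unit is strongly clean, and combined with the GUSC hypothesis on non-units this gives that every element of $R$ admits a strongly clean decomposition.

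There is no real obstacle: the only thing to verify is that the definition of strongly clean element admits $e = 0$ as a permissible idempotent, which is standard. Thus the proof is two lines: split $a \in R$ into the cases $a \in U(R)$ and $a \notin U(R)$, handle the former by $a = 0 + a$, and invoke GUSC for the latter.
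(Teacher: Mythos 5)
Your proposal is correct and follows the same route as the paper: split into the cases $a\in U(R)$ and $a\notin U(R)$, invoke the GUSC hypothesis for the latter, and note that units are strongly clean for the former. The only difference is that you make explicit the decomposition $u=0+u$ where the paper simply cites that units are known to be strongly clean.
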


\begin{proof}
Choosing $a \in R$, so $a \in U(R)$ or $a \not\in U(R)$. If $a \in U(R)$, then $a$ is known to be a strongly clean element. If, however, $a\not\in U(R)$, then by assumption $a$ is USC element, and thus $a$ is a strongly clean element. Finally, in both cases, $R$ is strongly clean, as expected.
\end{proof}

\begin{lemma}\label{lemma 2.6}
Let $R$ be a GUSC ring with $2 \in U(R)$ and, for every $u \in U(R)$, we have $u^2 = 1$. Then, $R$ is a commutative ring.
\end{lemma}

\begin{proof}
Firstly, we show that \( R \) is reduced, i.e., \( R \) contains no non-trivial nilpotent elements, so suppose \( q \in Nil(R) \). Then, \( (1 \pm q) \in U(R) \), whence
\[ 1 - 2q + q^2 = (1-q)^2 = 1 = (1+q)^2 = 1 + 2q + q^2. \]
Thus, \( 4q = 0 \). Since \( 2 \in U(R) \), we conclude \( q = 0 \). Hence, \( R \) is reduced and, consequently, \( R \) is abelian.
	
Moreover, for any \( u, v \in U(R) \), we have \( u^2 = v^2 = (uv)^2 = 1 \). Therefore, \( uv = (uv)^{-1} = v^{-1} u^{-1} = vu \), and hence the units commute with each other.
	
In addition, let \( x, y \in R \). We consider the following four possible cases:
	
1. \( x, y \in U(R) \): since the units commute, it must be that \( xy = yx \).
	
2. \( x, y \notin U(R) \): since \( R \) is a GUSC ring, there exist \( e, f \in Id(R) \) and $u, v\in U(R)$ such that \( x = e+u \) and \( y = f+v \). So, \( xy = yx \), because \( R \) is abelian and the units commute.
	
3. \( x \in U(R) \) and \( y \notin U(R) \): in this case, there exist \( e \in Id(R) \) and $u\in U(R)$ such that \( y = e+u \). Then, $xy=x(e+u)=xe+xu$. But, we have $xy=ex+ux=(e+u)x=yx$, because $R$ is abelian and the units commute.
	
4. \( x \notin U(R) \) and \( y \in U(R) \): similarly to case (3), we can easily see that \( xy = yx \).	

This completes our arguments after all.
\end{proof}

\begin{lemma}\label{lem2.3}
If $\displaystyle\prod_{i \in I} R_i$ is GUSC, then each direct component $R_i$ is GUSC too.
\end{lemma}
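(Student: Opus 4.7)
The plan is a direct componentwise argument. I would fix an index $j \in I$, take an arbitrary non-invertible $a \in R_j$, and aim to show that $a$ is uniquely strongly clean in $R_j$.

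First I would lift $a$ into $R := \prod_{i\in I} R_i$ by defining $\hat a = (a_i)_{i\in I}$ with $a_j = a$ and $a_i = 0$ for all $i \neq j$. Since its $j$-th coordinate is non-invertible in $R_j$, the tuple $\hat a$ is non-invertible in $R$, so the GUSC hypothesis supplies a unique idempotent $\hat e = (e_i)_{i\in I} \in {\rm Id}(R)$ with $\hat a - \hat e \in U(R)$ and $\hat a \hat e = \hat e \hat a$. Projecting to the $j$-th coordinate immediately produces $e_j \in {\rm Id}(R_j)$ satisfying $a - e_j \in U(R_j)$ and $a e_j = e_j a$, which handles existence of a strongly clean decomposition of $a$ in $R_j$.

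For uniqueness, suppose $f \in {\rm Id}(R_j)$ also satisfies $a - f \in U(R_j)$ and $af = fa$. I would lift $f$ back into $R$ by setting $\hat f$ to have $j$-th coordinate $f$ and $i$-th coordinates equal to $e_i$ for $i \neq j$. Then $\hat f$ is idempotent, $\hat a - \hat f$ differs from the unit $\hat a - \hat e$ only in position $j$ (where it equals the unit $a - f$) and so lies in $U(R)$, and $\hat a \hat f = \hat f \hat a$ by coordinatewise check. Uniqueness of $\hat e$ in $R$ forces $\hat f = \hat e$, whence $f = e_j$.

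The only delicate point is engineering the lift $\hat f$ so that all three defining properties of a uniquely strongly clean decomposition are preserved; matching its coordinates away from $j$ to those of the already-found $\hat e$ does this uniformly, and no deeper obstacle arises -- the argument is essentially coordinatewise bookkeeping.
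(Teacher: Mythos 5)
Your argument is correct and is essentially the paper's own proof: both lift the non-invertible element $a$ to a non-invertible tuple in the product, invoke uniqueness of the strongly clean decomposition there, and transfer it back coordinatewise (the paper pads with $1$'s instead of $0$'s and phrases uniqueness as a contradiction, but that is cosmetic). Your device of matching the off-$j$ coordinates of $\hat f$ to those of the already-found $\hat e$ cleanly disposes of the only delicate point, so nothing further is needed.
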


\begin{proof}
Choose an arbitrary $a_{i}\in R_{i}$, where $a_{i}\not\in U(R_{i})$, whence $$(1,1,\ldots , a_{i}, 1, \ldots )\not\in U(\prod R_{i}).$$ Thus, $(1,1,\ldots ,a_{i},1,\ldots )$ is USC and hence $a_{i}$ is USC, as required.

If, however, some element $a_{i}$ is not USC, then we have two different strongly clean decompositions for $a_{i}$ and so we have two different strongly clean decompositions for $(1,1,\ldots ,a_{i},1,\ldots )$. This shows the desired contradiction, as needed.
\end{proof}

We note that the opposite of Lemma \ref{lem2.3} is manifestly false. To exhibit that, both $\mathbb{Z}_{2}$ and $\mathbb{Z}_{3}$ are GUSC. But, the direct product $\mathbb{Z}_2 \times \mathbb{Z}_3$ is {\it not} GUSC, because the element $(0,2)$ is obviously {\it not} invertible in $\mathbb{Z}_2 \times \mathbb{Z}_3$ and $(0,2)=(1,0)+(1,2)=(1,1)+(1,1)$ are two different strongly clean decompositions of $(0,2)$ in $\mathbb{Z}_2 \times \mathbb{Z}_3$, so that $(0,2)$ is really {\it not} USC element, as pursued.

\begin{proposition}\label{prop2.5}
The direct product $\displaystyle\prod_{i\in I} R_i$ ($|I| \geq 2$) is GUSC if, and only if, each $R_i$ is USC.
\end{proposition}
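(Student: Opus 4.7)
My plan is to prove each direction independently, in both cases reducing to a componentwise analysis of strongly clean decompositions.

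The easy direction is $(\Leftarrow)$. I would observe that in any direct product a tuple $(a_j)$ is strongly clean precisely when each coordinate $a_j$ is, with decompositions compared componentwise; so if every $R_j$ is USC then the product is itself USC, and in particular GUSC.

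For $(\Rightarrow)$ the issue is that we want more than Lemma~\ref{lem2.3} gave us: we need each $R_i$ to be genuinely USC, not merely GUSC. To upgrade ``GUSC in a factor'' to ``USC in that factor'', I would, given an arbitrary (possibly invertible) $a_i \in R_i$, study the element
\[
x := (0,\ldots,0,a_i,0,\ldots,0) \in \prod_{j\in I} R_j
\]
with $a_i$ in coordinate $i$ and zeros elsewhere. Since $|I|\ge 2$, this $x$ is non-invertible in the product, hence USC there by hypothesis. The key observation is that the padded zero-coordinates force a rigid decomposition: any equation $0 = g + w$ with $g$ idempotent and $w$ a unit requires $g=1$ and $w=-1$. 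Consequently, strongly clean decompositions of $x$ correspond bijectively to strongly clean decompositions of $a_i$ in $R_i$, via
\[
a_i = e + u \quad \longleftrightarrow \quad x = (1,\ldots,1,e,1,\ldots,1) + (-1,\ldots,-1,u,-1,\ldots,-1),
\]
and the existence of at least one such decomposition of $a_i$ comes from the product being strongly clean (Lemma~\ref{lem2.2}). USC of $x$ then forces USC of $a_i$.

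The main subtlety will be the choice of $x$: it must be non-invertible regardless of whether $a_i$ is a unit (so that the GUSC hypothesis applies), while its padded coordinates must admit a unique strongly clean decomposition (so that the correspondence with decompositions of $a_i$ is a genuine bijection). Padding with zeros accomplishes both, thanks to the rigidity of $0=1+(-1)$. The degenerate case $|I|=1$ is of course excluded: then $\prod_{j}R_{j}=R_{i}$ itself, and a local non-USC ring such as $\mathbb{Z}_{(3)}$ from Example~\ref{exam1.7}(3) would violate the forward implication.
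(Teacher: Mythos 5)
Your argument is correct and follows essentially the same route as the paper: the easy direction is the componentwise fact that a product of USC rings is USC, and the forward direction hinges on the zero-padded element $(0,\dots,0,a_i,0,\dots)$ being non-invertible (for $|I|\ge 2$) and hence USC in the product. Your explicit bijection of strongly clean decompositions via the rigidity of $0=1+(-1)$ just fills in a step the paper leaves as an assertion, so there is nothing to change.
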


\begin{proof}
Letting each $R_{i}$ be USC, then $\prod R_{i}$ is USC employing \cite[Example 2]{2} and hence $\prod R_{i}$ is trivially GUSC.

Conversely, assume that $\prod R_{i}$ is GUSC and that $R_{j}$ is {\it not} USC for some index $j\in I$. Then, there exists $a\in R_{j}$ which is {\it not} USC. Hence, $(0,\ldots ,0,a,0,\ldots )$ is {\it not} USC in $\prod R_{i}$. However, it is readily seen that $(0,\ldots ,0,a,0,\ldots )$ is {\it not} invertible in $\prod R_{i}$ and thus, by hypothesis, $(0,\ldots ,0,a,0,\ldots )$ is {\it not} USC -- an absurd. Therefore, every $R_{i}$ is USC, as wanted.
\end{proof}

Three consequences validate now.

\begin{corollary}
Let $L=\prod_{i \in I} R_i$ with $|I| \geq 2$ be the direct product of rings $R_i \cong R$. Then, $L$ is a GUSC ring if, and only if, $L$ is a USC ring if, and only if, $R$ is USC.
\end{corollary}

\begin{corollary}
For any $n \geq 2$, the ring $R^n$ is GUSC if, and only if, $R^n$ is USC if, and only if, $R$ is USC.
\end{corollary}

\begin{corollary}\label{cor2.6}
Let $R$ be a ring and $0\neq e \in Id(R)\cap Z(R)$. If $R$ is GUSC, then so is $eRe$. In addition, if $e$ is non-trivial, i.e., $e\not= 0,1$, then $eRe$ is USC.
\end{corollary}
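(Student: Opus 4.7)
The plan is to reduce the statement to the product-style results already established, via the Peirce decomposition afforded by the central idempotent $e$. Since $e \in \mathrm{Id}(R)\cap \mathrm{Z}(R)$ with $e \neq 0$, there is the standard ring isomorphism $R \cong eRe \times (1-e)R(1-e)$, in which $e$ serves as the identity of the first factor and $1-e$ as the identity of the second. Because $e$ is central, every element $x\in R$ splits orthogonally as $x = ex + (1-e)x$ with each piece living in its own summand; consequently this isomorphism preserves idempotents, units, and commutativity, so strongly clean (respectively, uniquely strongly clean) decompositions transfer componentwise between $R$ and the pair of corner rings.

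For the first assertion, I would simply feed this decomposition into Lemma~\ref{lem2.3}: since $R$ is GUSC and Lemma~\ref{lem2.3} passes the GUSC property to direct factors, the corner $eRe$ inherits GUSC.

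For the second assertion, the extra hypothesis that $e$ is non-trivial (i.e.\ $e\neq 1$) ensures that $(1-e)R(1-e)$ is itself a nonzero ring, so the displayed isomorphism is genuinely a non-degenerate two-factor product. Proposition~\ref{prop2.5} then forces every factor of a GUSC product to be USC, and specializing to the $eRe$-factor completes the proof.

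The only point requiring mild care is checking that the Peirce isomorphism actually matches invertibility in $eRe$ (with identity $e$) with invertibility in the corresponding component of the product, and analogously for idempotency and commutativity of witnesses. This is essentially routine centrality bookkeeping, and I anticipate no serious obstacle; the whole proof should reduce to citing Lemma~\ref{lem2.3} and Proposition~\ref{prop2.5} after recording the Peirce splitting.
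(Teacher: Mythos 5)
Your proposal is correct and follows essentially the same route as the paper: the authors likewise write $R = eRe \oplus (1-e)R(1-e)$ using the centrality of $e$, then invoke Lemma~\ref{lem2.3} for the GUSC claim and Proposition~\ref{prop2.5} for the USC claim when $e$ is non-trivial. The Peirce bookkeeping you flag is indeed routine and is treated as such in the paper.
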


\begin{proof}
Assume $R$ is GUSC. We have decomposed $$R=eRe \oplus (1-e)R(1-e).$$ Hence, $eRe$ is GUSC applying Lemma \ref{lem2.3}. If $e$ is {\it not} trivial, then $eRe$ is USC utilizing Proposition \ref{prop2.5}, as claimed.
\end{proof}

We next proceed by proving the following slight refinement of the last corollary.

\begin{proposition}\label{1}
Let $R$ be a ring and $0\neq e \in Id(R)$. If $R$ is GUSC, then so is $eRe$.
\end{proposition}

\begin{proof}
Suppose $R$ is a GUSC ring, so it is strongly clean in view of Lemma~\ref{lem2.2} and hence $eRe$ is strongly clean viewing \cite[Theorem 1.1.12]{13}.

Now, choose $a \in eRe$ to be not invertible in $eRe$. Write $$a=e_{1}+u_{1}=e_{2}+u_{2},$$ where $e_{1}$, $e_{2}$ are idempotents in $eRe$ and $u_{1}$, $u_{2}$ are units in $eRe$. Thus, $v_{i}=u_{i}+(1-e)$ are units in $R$ for $i=1,2$. So, $e_{1}+v_{1}=e_{2}+v_{2}$ are two strongly clean decompositions in $R$. However, as $a$ is chosen to be non-invertible in $eRe$, it must be that $e_{i}+v_{i}$ is not invertible in $R$ for $i=1,2$. Then, by hypothesis, we deduce $e_{1}=e_{2}$. Hence, $eRe$ is GUSC, as asserted.  	
\end{proof}	

Before continuing further, a useful example to know is the following one.

\begin{example}\label{exam2.7}
If $R$ is a local ring, then $R$ is GUSC.
\end{example}

\begin{proof}
Let $a \in R\setminus U(R)$. So, one inspects that $a=1+(a-1)$ is the only strongly clean decomposition for $a$, and thus $a$ is GUSC, as requested.
\end{proof}

However, the converse of the Example~\ref{exam2.7} is false. For instance, it is quite clear that $T_{2}(\mathbb{Z}_{2})$ is GUSC, but is obviously {\it not} local.

\medskip

We remember that a ring $R$ is called {\it semi-potent} if every one-sided ideal {\it not} contained in $J(R)$ contains a non-zero idempotent. In addition, a semi-potent ring $R$ is called {\it potent} if all idempotents of $R$ lift modulo $J(R)$. Semi-potent rings and potent rings were also named as {\it $I_{0}$-rings} and {\it $I$-rings}, respectively, by Nicholson in \cite{11}. Recall, moreover, that in \cite{55} a ring $R$ is called {\it exchange}, provided that, for any $a$ in $R$, there is an idempotent $e\in R$ such that $e\in aR$ and $e\in (1-a)R$ (compare with \cite{3} as well).

\begin{proposition}\label{prop2.8}
Let $R$ be a ring and $Id(R)=\{0,1\}$. Then, the following items are equivalent:
\begin{enumerate}
\item
$R$ is a local ring.
\item
$R$ is a GUSC ring.
\item
$R$ is a strongly clean ring.
\item
$R$ is a clean ring.
\item
$R$ is an exchange ring.
\item
$R$ is a potent ring.
\item
$R$ is a semi-potent.
\item
$R$ is a GUC ring.
\end{enumerate}
\end{proposition}

\begin{proof}
(i) $\Rightarrow$ (ii). It is quite elementary owing to Example \ref{exam2.7}.\\
(ii) $\Rightarrow$ (iii). It is pretty easy according to Lemma \ref{lem2.2}.\\
(iii) $\Rightarrow$ (vi) $\Rightarrow$ (v) $\Rightarrow$ (vi) $\Rightarrow$ (vii) $\Rightarrow$ (i). All evident.\\
(i) $\Rightarrow$ (viii). It is apparent that an analogous proof to that of Example \ref{exam2.7} works.\\
(viii) $\Rightarrow$ (i). Let $a\in R$ with $a\not\in U(R)$. It is now sufficient to prove that $a-1\in U(R)$. To this goal, by hypothesis, $a$ is uniquely clean, so we have $a=e+u$, where $e\in {Id}(R)=\{0,1\}$ and $u\in U(R)$. If $e=0$, it must be that $a=u\in U(R)$, a contradiction. If $e=1$, we infer $a=1+u$ and hence $a-1=u\in U(R)$, as desired.
\end{proof}

The next necessary and sufficient condition somewhat completely characterizes GUC rings in terms of GUSC rings.

\begin{proposition}\label{prop2.9}
A ring $R$ is GUC if, and only if, $R$ is GUSC and abelian.
\end{proposition}

\begin{proof}
Suppose foremost that $R$ is GUSC and abelian. So, one observes that $R$ is either local or USC. If $R$ is local, then $R$ is UC in accordance with \cite[Example 2.8]{1} whence it is GUC.

If, however, $R$ is USC, then $R$ is UC thanks to \cite[Example 4]{2} and hence $R$ is GUC. Consequently, in both cases, $R$ is GUC.

The other implication is straightforward.
\end{proof}

Our presentation in what follows unambiguously illustrates that the complete characterization of non-abelian GUSC rings is insurmountable at this stage, so we perceive in the sequel only partial descriptions of these rings.

\medskip

The following crucial observation seems {\it not} to appear in \cite{1}.

\begin{proposition}\label{pro2.2}
In any ring, idempotents and nilpotent elements are USC elements.
\end{proposition}

\begin{proof}
Let $e$ be an arbitrary idempotent element in a ring $R$. Then, we have that $e=(1-e)+(2e-1)$ is a strongly clean decomposition for $e$. Now, suppose that $e=f+u$ another strongly clean decomposition for $e$. It suffices to show only that $f=1-e$. In fact, we know that $(e-f)^2\in Id(R)\cap U(R)$ and hence $(e-f)^2=1$. So, $f=(2f-1)e+1$. Therefore, $$f=(2f-1)f=(2f-1)((2f-1)e+1)=(2f-1)(2f-1)e+(2f-1)=e+2f-1,$$ whence, $f=1-e$, as sufficed.

Suppose now that $q\in Nil(R)$ is chosen arbitrarily. Then, we have that $q=1+(q-1)$ is a strongly clean decomposition for $q$. Letting $q=e+u$ be an another strongly clean decomposition, it needs to show only that $e=1$. In fact, assume that $q^n=0$ for some natural $n>1$, and so $(e+u)^n=0$. Since $eu=ue$, one checks that $u^n\in eR$. That is why, $e=1$, as needed.
\end{proof}

Our pivotal demonstration is this one (compare with Example~\ref{exam1.7}(iv) too).

\begin{example}\label{exmnew}
The ring $M_2(\mathbb{Z}_2)$ is a GUSC ring, however is {\it not} a USC ring. This enables us that, under some extra conditions, matrix rings can be GUSC rings, but one knows that matrix rings are {\it never} USC rings.

Indeed, it follows from the equality $$M_2(\mathbb{Z}_2) = U(M_2(\mathbb{Z}_2)) \cup Id(M_2(\mathbb{Z}_2)) \cup Nil(M_2(\mathbb{Z}_2))$$ combined with Proposition~\ref{pro2.2} that $M_2(\mathbb{Z}_2)$ is GUSC, as asked.
\end{example}

With these arguments at hand, one can extract the following interesting affirmation.

\begin{corollary}\label{cor2.11}
For each $k \in \mathbb{N}$, the ring $M_2(\mathbb{Z}_{2^k})$ is GUSC.
\end{corollary}

We now come to our first major statement which sounds a bit curiously.

\begin{theorem}\label{3}
For any ring $S \neq 0$ and any integer $n \ge 3$, the ring $M_n(S)$ need not be GUSC.
\end{theorem}

\begin{proof}
Since it is well known that $M_3(S)$ is isomorphic to a corner subring of $M_n(S)$, we need to establish that $M_3(S)$ is {\it not} a GUSC ring invoking Proposition~\ref{1}. To this purpose, consider the matrix
$$A =\begin{pmatrix}
		1 & 1 & 0 \\
		1 & 0 & 0 \\
		0 & 0 & 0
\end{pmatrix} \notin U(M_3(S)).$$ A plain inspection gives us that $A$ is {\it not} a strongly clean element and hence it is not a uniquely strongly clean element, as promised. Therefore, $R$ cannot be a GUSC ring, as aimed.
\end{proof}

The next construction is worthy of recording.     	

\begin{example}\label{exam2.13}
Let $R$ be a ring. If $T_{n}(R)$ is a GUSC ring, then $R$ is also GUSC. The converse holds, provided $R$ is commutative and USC (or just UC).
\end{example}

\begin{proof}
Let $T_{n}(R)$ be GUSC and put $e:={\rm diag}(1,0,\ldots ,0)\in T_{n}(R)$. Then, one sees that $R\cong eT_{n}(R)e$, so that $R$ is a GUSC ring using Proposition \ref{1}.

The reciprocal claim is clear exploiting \cite[Theorem 10]{2}.
\end{proof}

We continue our work with the following assertions, the first of which is immediate but worthy of mentioning.

\begin{proposition}\label{prop2.14}
For any ring $R$, the polynomial ring $R[x]$ is not GUSC.
\end{proposition}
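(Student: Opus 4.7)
The plan is to use Lemma~\ref{lem2.2}, which tells us that every GUSC ring is strongly clean. Thus it suffices to exhibit an element of $R[x]$ that admits no strongly clean decomposition. The natural candidate is the indeterminate $x$ itself, and the argument will proceed by contradiction, assuming a decomposition $x=e+u$ with $e\in {\rm Id}(R[x])$, $u\in U(R[x])$, and $eu=ue$ (the last condition is automatic here since $R[x]$ is commutative).

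The proof relies on two classical facts about polynomial rings over a commutative ring $R$. First, every idempotent of $R[x]$ lies in $R$, so ${\rm Id}(R[x])={\rm Id}(R)$; this is a quick consequence of comparing degrees and leading coefficients in $f(x)^2=f(x)$. Second, by McCoy's theorem, a polynomial $a_0+a_1x+\cdots+a_nx^n$ is a unit in $R[x]$ if and only if $a_0\in U(R)$ and $a_1,\dots,a_n\in {\rm Nil}(R)$.

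Given these, I would argue as follows. First dispose of the trivial case $R=0$ as part of the standing convention that rings are non-zero. Now suppose $x=e+u$ with $e\in {\rm Id}(R)$ and $u\in U(R[x])$. Then $u=x-e$, whose constant term is $-e$. By McCoy, $-e\in U(R)$, so $e$ is simultaneously an idempotent and a unit of $R$, forcing $e=1$. Consequently $u=x-1$, whose coefficient of $x$ equals $1$. Invoking McCoy once more, $1\in {\rm Nil}(R)$, which contradicts $R\neq 0$.

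Hence $x$ is not strongly clean in $R[x]$, so $R[x]$ is not strongly clean, and by Lemma~\ref{lem2.2} it cannot be GUSC. The potential snag is merely bookkeeping around the zero ring (as in that degenerate setting every element is vacuously invertible and the claim fails), so the statement should be read under the standing hypothesis $R\neq 0$; everything else is a clean application of the structure of idempotents and units in $R[x]$.
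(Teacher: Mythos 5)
Your proof is correct and follows essentially the same route as the paper: both arguments reduce the claim to showing that $R[x]$ is not (strongly) clean and then invoke Lemma~\ref{lem2.2}. The only difference is that the paper simply cites as known the fact that $R[x]$ is never clean for commutative $R$, whereas you prove it directly by showing $x$ itself admits no clean decomposition; the one small point to tighten is that ${\rm Id}(R[x])={\rm Id}(R)$ needs slightly more care than ``comparing degrees and leading coefficients'' when $R$ has nilpotents (e.g.\ reduce modulo each prime to see the higher coefficients are nilpotent, then use that two commuting idempotents differing by a nilpotent are equal), though the fact is of course classical.
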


\begin{proof}
We know that $R[x]$ is not clean, so it is {\it not} strongly clean and thus is definitely {\it not} GUSC.
\end{proof}

The next claim surprisingly does {\it not} appear in \cite{1}.

\begin{lemma}\label{lem2.18}
If $R$ is a USC ring, then $ucn(R)=ucn_0(R)$.
\end{lemma}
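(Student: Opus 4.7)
The plan is to exploit the two structural facts that in a USC ring $R$ the quotient $R/J(R)$ is Boolean and $2\in J(R)$ (both proved in \cite{2}), and to leverage the bijection $e\leftrightarrow 1-e$ between idempotents. Writing $\bar{x}$ for the image of $x$ in $R/J(R)$, the key observation is that in the Boolean quotient the only unit is $\bar 1$, so every clean decomposition $a=f+u$ is forced to satisfy $\bar f=1-\bar a$; combined with $2\in J(R)$, this matches the idempotent appearing in a ``central idempotent plus radical'' description with the one appearing in a ``uniquely clean'' description.

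For the inclusion $ucn_{0}(R)\subseteq ucn(R)$ I would start from $a=e+j$ with $e^{2}=e\in {\rm Z}(R)$ and $j\in J(R)$, and first produce an explicit clean decomposition: since $2\in J(R)$, the element $u:=2e-1+j$ lies in $-1+J(R)\subseteq U(R)$, so $a=(1-e)+u$ is clean. For uniqueness, if $a=f+v$ is any clean decomposition with $f$ idempotent and $v\in U(R)$, then reducing modulo $J(R)$ forces $\bar v=\bar 1$ and $\bar f=\overline{1-e}$, so $f=(1-e)+j'$ for some $j'\in J(R)$. Expanding $f^{2}=f$ and using that $e$, hence $1-e$, is central so that $(1-e)j'=j'(1-e)$, I would obtain $j'(2e-1-j')=0$; since $2e-1-j'\in U(R)$, this forces $j'=0$, and therefore $f=1-e$.

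For the reverse inclusion $ucn(R)\subseteq ucn_{0}(R)$, I would take $a\in ucn(R)$ with unique clean decomposition $a=f+u$ and set $e:=1-f$. Reducing modulo $J(R)$ gives $\bar a=\bar f+\bar 1=\bar 1-\bar f=\bar e$, so $j:=a-e\in J(R)$ and $a=e+j$ with $e$ idempotent and $j\in J(R)$. The only remaining point, and the main obstacle, is that $e$ (equivalently $f$) is central. I would use the corner idempotent trick: for an arbitrary $x\in R$ the element $g:=f+(1-f)xf$ is idempotent, and because $R/J(R)$ is commutative, $\bar g=\bar f$; in particular $(1-f)xf\in J(R)$, so $a-g=u-(1-f)xf$ remains a unit, giving a second clean decomposition $a=g+(a-g)$. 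Uniqueness of the clean decomposition of $a$ then forces $g=f$, i.e.\ $(1-f)Rf=0$, and the symmetric construction $h:=f+fx(1-f)$ yields $fR(1-f)=0$, so $f$ is central.

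The genuine difficulty is this centrality step: USC rings do not in general lift idempotents uniquely, so centrality of $e$ is not a structural fact about $R$ but must be extracted from the uniqueness of the clean decomposition of the particular element $a$, and the corner-idempotent construction combined with commutativity of $R/J(R)$ is precisely what converts that local uniqueness into the two-sided annihilation $(1-f)Rf=fR(1-f)=0$ needed for centrality.
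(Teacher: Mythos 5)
Your proof is correct, but it follows a genuinely different route from the paper's. The paper disposes of the lemma in two lines by quoting two external results: $R/J(R)$ is Boolean for a USC ring \cite[Corollary 18]{2}, and the transfer principle \cite[Proposition 3.6]{1}, which pushes the equality $ucn=ucn_{0}$ from $R/J(R)$ up to $R$ once idempotents lift modulo $J(R)$ (which they do, $R$ being clean). You instead give a self-contained element-wise argument using only the two facts that $R/J(R)$ is Boolean and $2\in J(R)$: for $ucn_{0}(R)\subseteq ucn(R)$ you exhibit the decomposition $a=(1-e)+(2e-1+j)$ and kill any competing idempotent $f=(1-e)+j'$ by the computation $j'(2e-1-j')=0$ with $2e-1-j'\in -1+J(R)\subseteq U(R)$; for the reverse inclusion you extract centrality of the idempotent $f$ from the uniqueness of the clean decomposition of the single element $a$ via the perturbed idempotents $f+(1-f)xf$ and $f+fx(1-f)$, which stay within $J(R)$ of $f$ because the quotient is commutative, hence still give clean decompositions of $a$. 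Both steps check out. What your approach buys is transparency and independence from \cite[Proposition 3.6]{1}: it isolates exactly which structural hypotheses are used where ($2\in J(R)$ only for the inclusion $ucn_{0}(R)\subseteq ucn(R)$; the Boolean quotient for both), and it makes visible the mechanism --- uniqueness of one clean decomposition forcing $(1-f)Rf=fR(1-f)=0$ --- that the paper's citation hides. The cost is length; the paper's proof is shorter but opaque without chasing the two references.
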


\begin{proof}
For any ring $R$, we know that $$ucn_0(R)=\left\{e+j \mid e^2=e \in Z(R), j \in J(R)\right\}.$$ We are also aware that $R$ is a clean ring, and thus idempotents lift modulo $J(R)$. Moreover, the quotient $\dfrac{R}{J(R)}$ is boolean bearing in mind \cite[Corollary 18]{2}, so that the equality $$ucn\left(\dfrac{R}{J(R)}\right)=ucn_0\left(\dfrac{R}{J(R)}\right)$$ forces $ucn(R)=ucn_{0}(R)$ knowing \cite[Proposition 3.6]{1}, as stated.
\end{proof}

We are now ready to attack and resolve in the affirmative \cite[Question 4.8]{1} asking: Is it true that $R$ is a GUC ring if, for every non-invertible element $a\in R$, there exists an unique idempotent $e\in R$ such that $a-e\in J(R)$?

\begin{proposition}\label{prop2.21}
The answer to this question is positive.
\end{proposition}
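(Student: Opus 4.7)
The plan is to show that the hypothesis forces $\bar R := R/J(R)$ into the dichotomy ``division ring or boolean,'' after which $R$ is either local or its idempotents are forced to be central, and in either subcase the uniqueness of the clean decomposition can be extracted directly. The hypothesis passes cleanly to $\bar R$: any non-unit $\bar a \in \bar R$ lifts to a non-unit $a \in R$, which by hypothesis is congruent modulo $J(R)$ to an idempotent, so $\bar a$ itself is idempotent. Starting from this, if $\bar R$ admits a nontrivial idempotent $\bar a$, then for every $\bar x \in \bar R$ the products $\bar a \bar x$ and sums $\bar a + \bar y$ (with $\bar y$ a unit) must be non-units (otherwise $\bar a$ would be right-invertible and hence equal to $1$), and a chain of elementary computations exploiting ``non-unit $=$ idempotent'' first yields $2\bar a = 0$, then $2\cdot 1 = 0$ (applying the same to the nontrivial idempotent $1 - \bar a$), and finally $\bar y^2 = \bar y$ for every unit $\bar y$ by forcing $\bar y - 1$ to be a non-unit. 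Thus $\bar R$ is boolean whenever it is not a division ring.

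If $\bar R$ is a division ring, then $R$ is local, and every non-invertible $a \in R$ admits only the clean decomposition $a = 1 + (a-1)$, since ${\rm Id}(R) = \{0,1\}$ and an idempotent part of $0$ would force $a$ to be a unit; so $R$ is GUC in this branch.

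The substantive branch is when $\bar R$ is boolean, and the key step I would carry out here is to prove that $R$ is abelian. This is the step I expect to be the main obstacle, since the hypothesis only supplies uniqueness of idempotents modulo $J(R)$, and one has to manufacture a second genuine idempotent lying in the same residue class as a given $e$ in order to invoke it. The idea, for an idempotent $e \in R$ and arbitrary $r \in R$, is to set $x := er(1-e)$ and form $a := e + x$. A direct expansion using $ex = x$, $xe = 0$ and $x^2 = 0$ gives $a^2 = a$, while the commutativity of $\bar R$ forces $\bar x = \bar e\,\bar r\,(1-\bar e) = 0$, so $x \in J(R)$. Provided $e$ is nontrivial, $a$ is then a non-invertible idempotent with both $a - a = 0$ and $a - e = x$ in $J(R)$, and the uniqueness hypothesis forces $a = e$, i.e.\ $er(1-e) = 0$. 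The symmetric calculation gives $(1-e)Re = 0$, so every idempotent of $R$ is central.

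Once $R$ is abelian, uniqueness of the clean decomposition follows routinely. Given $a = e + u = f + v$ with $a$ non-invertible, $e, f \in {\rm Id}(R)$ and $u, v \in U(R)$, reducing modulo $J(R)$ and using that the only unit of a boolean ring is $1$ gives $\bar u = \bar v = 1$, whence $\bar e = \bar f$, i.e.\ $e - f \in J(R)$. The standard identity $(e-f)^3 = e - f$ for commuting idempotents then rearranges as $(e-f)\bigl((e-f)^2 - 1\bigr) = 0$; since $(e-f)^2 \in J(R)$ the second factor is a unit, forcing $e = f$. This completes the proof that $R$ is GUC and settles Question 4.8 affirmatively.
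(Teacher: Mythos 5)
Your overall strategy is sound, and its engine is the same as the paper's: for an idempotent $e$ and $r\in R$, the element $e+er(1-e)$ is again an idempotent congruent to $e$ modulo $J(R)$, and the uniqueness hypothesis collapses the two, forcing $er(1-e)=(1-e)re=0$. Where you differ is in the surrounding scaffolding. The paper gets $er(1-e)\in J(R)$ at once by first proving $\mathrm{Nil}(R)\subseteq J(R)$ directly from the hypothesis (a square-zero non-unit is congruent modulo $J(R)$ to its unique idempotent, which is then both idempotent and nilpotent in $R/J(R)$, hence zero); it applies the uniqueness to the original non-unit $a$ rather than to the new idempotent itself, and then simply cites \cite[Theorem 3.5]{1} to finish. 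You instead build the boolean/division dichotomy for $R/J(R)$ from scratch and verify unique cleanness by hand. That is more self-contained, but the detour through the dichotomy is not needed to see $er(1-e)\in J(R)$: square-zero plus $\mathrm{Nil}(R)\subseteq J(R)$ already does it, without knowing $R/J(R)$ is commutative.

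Two points in your write-up need repair. First, in the dichotomy argument the parenthetical justification that $\bar a+\bar y$ ``must be a non-unit, otherwise $\bar a$ would be right-invertible'' is false as stated ($0+1$ is a unit while $0$ is not right-invertible), and the key step ``every unit of $\bar R$ equals $1$'' is left essentially unproved. The claim is true and can be fixed as follows: $\bar a\bar y(1-\bar a)$ is a non-unit (a unit of the form $\bar a\cdot(\ast)$ would make $\bar a$ right-invertible, hence equal to $1$), so it is an idempotent; it squares to zero because $(1-\bar a)\bar a=0$, so it vanishes; combined with the idempotency of $\bar a\bar y$, which after right-multiplying by $\bar y^{-1}$ gives $\bar a\bar y\bar a=\bar a$, this yields $\bar a\bar y=\bar a$, symmetrically $(1-\bar a)\bar y=1-\bar a$, and adding gives $\bar y=1$. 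Second, in the boolean branch you only verify \emph{uniqueness} of clean decompositions, never \emph{existence}: to conclude that $R$ is GUC you must also observe that a non-unit $a=e+j$ with $e^2=e$ and $j\in J(R)$ is clean via $a=(1-e)+(2e-1+j)$, since $(2e-1)^2=1$ makes $2e-1$ a unit and $U(R)+J(R)=U(R)$. With these two patches your argument goes through and is a legitimate, more elementary alternative to the paper's citation of \cite[Theorem 3.5]{1}.
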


\begin{proof}
First, we menage to show that $Nil(R)\subseteq J(R)$. In fact, let $a\in Nil(R)$.

If, for a moment, $a\in U(R)$, then $a=0$ and hence $a\in J(R)$.

If, however, $a\not\in U(R)$, there will exist an unique idempotent $e\in R$ such that $a-e\in J(R)$. Then, we have $a+J(R)=e+J(R)$, and thus $$a+J(R)\in Id(\dfrac{R}{J(R)}) ~ \bigvee ~ a+J(R)\in Nil(\dfrac{R}{J(R)}).$$ So, it must be that $a+J(R)=J(R)$ whence $a\in J(R)$.

Now, we intend to prove that, for every non-invertible element $a\in R$, there exists an unique central idempotent $e\in R$ such that $a-e\in J(R)$. In fact, let $a\in R$, where $a\not\in U(R)$, so by hypothesis there is an unique idempotent $e\in R$ such that $a-e\in J(R)$. It now suffices to establish that $e\in Z(R)$. To this aim, for each $r\in R$, $$er(1-e), (1-e)re\in Nil(R)\subseteq J(R)$$ and $$e+(1-e)re, e+er(1-e)\in Id(R),$$ so we deduce $$a-(e+er(1-e)),a-(e+(1-e)re)\in J(R).$$ However, by the uniqueness, we derive $$e+(1-e)re=e+er(1-e)=e.$$ Therefore, $er=ere=re$ and hence $e\in Z(R)$, as we want. Finally, working with \cite[Theorem 3.5]{1}, we can conclude that $R$ is GUC, as we promised above.
\end{proof}

Let $R$ be a ring and $M$ a bi-module over $R$. The trivial extension of $R$ and $M$ is defined as
\[ T(R, M) = \{(r, m) : r \in R \vee m \in M\}, \]
with addition defined component-wise and multiplication defined by
\[ (r, m)(s, n) = (rs, rn + ms). \]
The trivial extension $T(R, M)$ is isomorphic to the subring consisting of matrices of the sort
\[ \left\{ \begin{pmatrix} r & m \\ 0 & r \end{pmatrix} : r \in R \text{ and } m \in M \right\} \]
of the formal $2 \times 2$ matrix ring $\begin{pmatrix} R & M \\ 0 & R \end{pmatrix}$, and likewise additionally $T(R, R) \cong R[x]/\left\langle x^2 \right\rangle$.

We, moreover, note that the set of units of the trivial extension $T(R, M)$ is precisely
\[ U(T(R, M)) = T(U(R), M). \]

\noindent The next statement is worthwhile.

\begin{proposition}\label{prop2.22}
If the trivial extension $T(R,M)$ is a GUSC ring, then $R$ is a GUSC ring. The converse implication holds, provided $em=me$ for all $m\in M$ and $e\in Id(R)$.
\end{proposition}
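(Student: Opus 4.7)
The plan is to handle the two directions separately, and in the converse to invoke the dichotomy \textbf{local or USC} provided by Lemma~\ref{lem2.9}.

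\medskip

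For the forward direction, note that $(r,m)\in U(T(R,M))$ if and only if $r\in U(R)$, so that the surjective ring map $\pi\colon T(R,M)\to R$, $(r,m)\mapsto r$, satisfies $\pi^{-1}(R\setminus U(R))\supseteq (R\setminus U(R))\times\{0\}$. Given $a\in R\setminus U(R)$, I would consider $(a,0)\in T(R,M)\setminus U(T(R,M))$ and take its unique strongly clean decomposition $(a,0)=(e,m_0)+(u,n_0)$. Projecting by $\pi$ yields a strongly clean decomposition $a=e+u$ in $R$. For uniqueness, any decomposition $a=f+v$ in $R$ lifts to a strongly clean decomposition $(a,0)=(f,0)+(v,0)$ in $T(R,M)$, so the uniqueness in $T(R,M)$ forces $(e,m_0)=(f,0)$ and, in particular, $e=f$.

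\medskip

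For the converse, assume $R$ is GUSC and $ex=xe$ for every $x\in M$ and every $e\in{\rm Id}(R)$. By Lemma~\ref{lem2.9}, $R$ is either local or USC. If $R$ is local, I would check that $T(R,M)$ is local too: a non-unit of $T(R,M)$ is precisely a pair $(r,m)$ with $r\notin U(R)$, and since the non-units of $R$ form an ideal, $(r_1,m_1)+(r_2,m_2)=(r_1+r_2,m_1+m_2)$ is again a non-unit. Then Example~\ref{exam2.7} yields that $T(R,M)$ is GUSC. If $R$ is USC, I claim $T(R,M)$ is USC. Given $(a,m)\in T(R,M)$, write $a=e+u$ uniquely with $e^2=e$, $u\in U(R)$, $eu=ue$, and take
\[
(a,m)=(e,0)+(u,m).
\]
Using the hypothesis $em=me$ one checks that $(e,0)$ is an idempotent, $(u,m)$ is a unit, and $(e,0)(u,m)=(eu,em)=(ue,me)=(u,m)(e,0)$.

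\medskip

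The delicate part is uniqueness. Suppose $(a,m)=(f,n)+(v,w)$ is another strongly clean decomposition. From $(f,n)^2=(f,n)$ and $fn=nf$ (by the hypothesis) I get $f^2=f$ and $2fn=n$. Projecting to $R$ gives $a=f+v$ with $fv=vf$, so the USC property of $R$ forces $f=e$, and comparing second coordinates gives $w=m-n$. Now I would invoke \cite[Lemma 16]{2}, which asserts that in a USC ring $2\in J(R)$; consequently $(1-2e)^2=1-4e+4e^2=1$ shows $1-2e\in U(R)$. Applied to the equation $(1-2e)n=0$ obtained from $2en=n$, this yields $n=0$, and hence $w=m$. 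Thus the decomposition of $(a,m)$ is unique, so $T(R,M)$ is USC, hence GUSC. The main obstacle is precisely this last step, where one needs the hypothesis $ex=xe$ in order to reduce the idempotency condition on $(f,n)$ to the invertibility of $1-2e$, which in turn rests on $2\in J(R)$ for USC rings.
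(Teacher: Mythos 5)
Your argument is correct, and the forward direction is exactly the paper's: embed a non-unit $a$ as $(a,0)$, take the unique strongly clean decomposition upstairs, and transfer uniqueness by lifting any decomposition $a=f+v$ to $(f,0)+(v,0)$. For the converse the paper argues directly rather than through cases: given a non-unit $(a,m)$, its first coordinate $a$ is a non-unit of $R$, hence USC; the paper's Claim shows every idempotent of $T(R,M)$ has the form $(f,0)$ (from $(f,n)^2=(f,n)$ and $fn=nf$ one gets $2fn=n$, and multiplying by $f$ forces $fn=0$, hence $n=0$), so any strongly clean decomposition of $(a,m)$ projects to one of $a$ and uniqueness transfers back. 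You instead route the converse through the dichotomy of Lemma~\ref{lem2.9}, proving the stronger statements that $T(R,M)$ is local when $R$ is local and USC when $R$ is USC; this is a legitimate alternative (and gives slightly more information), though the computational core --- that idempotents of $T(R,M)$ are exactly the $(e,0)$ --- is the same. Two small remarks: your appeal to \cite[Lemma 16]{2} is superfluous, since $(1-2e)^2=1-4e+4e^2=1$ holds in any ring, so $1-2e$ is a unit with no need for $2\in J(R)$ and the word ``consequently'' there is a non sequitur; and in your local case the hypothesis $ex=xe$ is automatic because ${\rm Id}(R)=\{0,1\}$.
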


\begin{proof}
First we claim that, if $(e,m)^{2}=(e,m)\in T(R,M)$, then $e^{2}=e$ and $m=0$. In fact, the equality $(e,m)^{2}=(e,m)$ automatically gives by comparison that $e^{2}=e$ and $em+me=m$. So, by hypothesis, we have $2em=m$, whence $2em=em$ and so $em=0$. Thus, $2em=0$ and hence $m=0$, proving the claim.
	
Choose now $a\in R$, where $a\not\in U(R)$, so that $(a,0)\notin U(T(R,M))$ and thus $(a,0)$ is a USC element in $T(R,M)$. Then, $a$ is a USC element in $R$, because for otherwise, if $a$ has two different strongly clean decompositions, then $(a,0)$ has two different strongly clean decompositions contradicting our assumption.

For the reciprocal implication, choose $(a,m)\in T(R,M)$, where $(a,m)\notin U(T(R,M))$, so that $a\not\in U(R)$ as for otherwise, if $a\in U(R)$, then $(a,m)\in U(T(R,M))$ with inverse $(a^{-1},-a^{-1}ma^{-1})$, a contradiction, and therefore by hypothesis $a$ is a USC element. Consequently, the element $(a,m)$ is USC, because for otherwise, if $(a,m)=(e,0)+(u,m)=(f,0)+(v,m)$, where $(e,0), (f,0)\in Id(T(R,M))$, $(u,m),(v,m)\in U(T(R,M))$ and
$(e,0)(u,m)=(u,m)(e,0)$, $(f,0)(v,m)=(v,m)(f,0)$, then we will have $$a=e+u=f+v,$$ where $e,f\in Id(R)$, $u,v\in U(R)$ and $eu=ue$, $fv=vf$. Finally, $a$ possesses two strongly clean decompositions which is impossible.
\end{proof}

Two direct consequences sound as follows.

\begin{corollary}
If the trivial extension $T(R,R)$ is a GUSC ring, then $R$ is a GUSC ring. The opposite assertion holds when the ring $R$ is abelian.
\end{corollary}

\begin{corollary}
Let $R$ be an abelian ring, and let $M$ be a bi-module over $R$. Then, the following four statements are equivalent:
\begin{enumerate}
\item
$R$ is a GUSC ring.
\item
$T(R, R)$ is a GUSC ring.
\item
$R[x]/\left\langle x^2 \right\rangle$ is a GUSC ring.
\item
$R[[x]]/\langle x^2\rangle$ is a GUSC ring.
\end{enumerate}
\end{corollary}

Consider now $R$ to be a ring and $M$ to be a bi-module over $R$. Set $$BT(R,M) := \{ (a, m, b, n) | a, b \in R, m, n \in M \}$$ with addition defined component-wise and multiplication defined by $$(a_1, m_1, b_1, n_1)(a_2, m_2, b_2, n_2) = (a_1a_2, a_1m_2 + m_1a_2, a_1b_2 + b_1a_2, a_1n_2 + m_1b_2 + b_1m_2 +n_1a_2).$$ Then, $BT(R,M)$ becomes a ring which is isomorphic to $T(T(R, M), T(R, M))$. We also have the equality $$BT(R, M) =
\left\{\begin{pmatrix}
	a &m &b &n\\
	0 &a &0 &b\\
	0 &0 &a &m\\
	0 &0 &0 &a
\end{pmatrix} |  a,b \in R, m,n \in M\right\}.$$ In particular, we obtain the following isomorphism as rings: $$\dfrac{R[x, y]}{\langle x^2, y^2\rangle} \rightarrow BT(R, R)$$ via the map $$a + bx + cy + dxy \mapsto
\begin{pmatrix}
	a &b &c &d\\
	0 &a &0 &c\\
	0 &0 &a &b\\
	0 &0 &0 &a
\end{pmatrix}.$$

We, thereby, arrive at the following three corollaries.

\begin{corollary}
If $BT(R,M)$ is a GUSC ring, then $R$ is a GUSC ring. The converse is true, provided $em=me$ for all $m\in M$ and $e\in Id(R)$.
\end{corollary}

\begin{corollary}
If $BT(R,R)$ is a GUSC ring, then $R$ is a GUSC ring. The converse is valid when the ring $R$ is abelian.
\end{corollary}

\begin{corollary}
Let $R$ be an abelian ring, and let $M$ be a bi-module over $R$. Then, the following three statements are equivalent:
\begin{enumerate}
\item
$R$ is a GUSC ring.
\item
$BT(R, R)$ is a GUSC ring.
\item
$R[x, y]/\langle x^2, y^2\rangle$ is a GUSC ring.
\end{enumerate}
\end{corollary}

Let $R$ be a ring, and let $M$ be a bi-module over $R$ that is a non-unital ring in which the equalities $(mn)r=m(nr)$, $(mr)n=m(rn)$ and $(rm)n=r(mn)$ are fulfilled for all $m,n\in M$ and $r\in R$. Then, the ideal-extension $I(R,M)$ of $R$ by $M$ is defined as the additive abelian group $I(R,M)=R\oplus M$ with multiplication given by $$(r,m)(s,n)=(rs, rn+ms+mn).$$

\medskip

We are now in a position to establish the validity of the following statement.

\begin{proposition}\label{prop2.23}
If the ideal extension $I(R,M)$ is a GUSC ring, then $R$ is a GUSC ring. The converse holds provided the following conditions are satisfied:
\begin{enumerate}
\item[(a)]
If $e\in Id(R)$, then $em=me$ for all $m\in M$.
\item[(b)]
If $m\in M$, then $m+n+mn=0$ for some $n\in M$.
\end{enumerate}
\end{proposition}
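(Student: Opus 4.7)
My plan is to mirror the structure of the proof of Proposition~\ref{prop2.22}, adapting for the extra multiplicative term $mn$ appearing in $I(R,M)$. For the forward direction, the projection $\pi\colon I(R,M) \to R$, $(r,n) \mapsto r$, is a surjective ring homomorphism, so if $a \in R$ is non-invertible then $(a,0)$ is non-invertible in $I(R,M)$ and hence uniquely strongly clean by hypothesis. Any two strongly clean decompositions $a = e + u = f + v$ in $R$ lift to strongly clean decompositions $(a,0) = (e,0) + (u,0) = (f,0) + (v,0)$ in $I(R,M)$, since the idempotent, unit, and commutation conditions all transfer trivially under $(r,0)\mapsto r$. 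Uniqueness in $I(R,M)$ then forces $e = f$.

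For the converse, the key preliminary observation under (b) is that $(1,x)$ is a unit in $I(R,M)$ for every $x \in M$: applying (b) to $x$ produces $n \in M$ with $x + n + xn = 0$, giving $(1,x)(1,n) = (1,0)$, and applying (b) to $n$ yields a left inverse as well. Consequently $0 \oplus M \subseteq J(I(R,M))$, since $1 - (r,n)(0,m) = (1, -rm - nm)$ is always a unit; moreover, the factorization $(r,m) = (r,0)(1, r^{-1}m)$, valid when $r \in U(R)$, together with $\pi$, shows that $(r,m)$ is invertible in $I(R,M)$ if and only if $r \in U(R)$. Now given $(a,m)$ non-invertible, $a$ is non-invertible in $R$ and hence has a unique strongly clean decomposition $a = e + u$; then $(a,m) = (e,0) + (u,m)$ is a strongly clean decomposition in $I(R,M)$, with $(e,0)$ idempotent, $(u,m) = (u,0)(1, u^{-1}m)$ a unit, and commutation coming from (a) combined with $eu = ue$.

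Uniqueness now reduces, as in Proposition~\ref{prop2.22}, to the analogous claim that every idempotent $(e,x) \in I(R,M)$ has $x = 0$. Squaring and using (a) give $e^2 = e$ and $2ex + x^2 = x$. Splitting into the $eM$ and $(1-e)M$ components, the $(1-e)$-part reduces to $(1-e)x^2 = (1-e)x$; setting $w := (1-e)x$, the three associativity axioms force $w^2 = wx = w$, so $(0,w)$ is an idempotent lying in $J(I(R,M))$, whence $w = 0$ and $x = ex$. Substituting back, the $e$-component collapses to $x^2 + x = 0$, giving $(0,x)\bigl(1 + (0,x)\bigr) = 0$; since $1 + (0,x) = (1,x)$ is a unit by (b), we conclude $x = 0$.

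The main obstacle I anticipate is the careful bookkeeping of the associativity axioms $(mn)r = m(nr)$, $(mr)n = m(rn)$ and $(rm)n = r(mn)$ when verifying $w^2 = w$; this is the linchpin that lets the Jacobson-radical argument eliminate the $(1-e)$-part of $x$. Hypothesis (b) then plays a dual role — first, via $0 \oplus M \subseteq J(I(R,M))$, to kill $w$; and second, via $(1,x) \in U(I(R,M))$, to kill the residual $x$.
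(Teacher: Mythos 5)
Your proposal is correct and follows essentially the same route as the paper: both directions reduce uniqueness of strongly clean decompositions of $(a,m)$ to that of $a$ via the facts that, under (a) and (b), every idempotent of $I(R,M)$ has the form $(e,0)$ and $(r,m)$ is a unit exactly when $r\in U(R)$. The only difference is that you prove these structural facts directly (via $0\oplus M\subseteq J(I(R,M))$ and the unit $(1,x)$), whereas the paper simply cites \cite[Proposition 7]{3} for them; your verification of the idempotent claim, including the associativity bookkeeping for $w=(1-e)x$, is sound.
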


\begin{proof}
The necessity is rather identical to the arguments given in Proposition \ref{prop2.22}.

To prove the sufficiency, we first know with the help of (a), (b) and \cite[Proposition7]{3} that each idempotent of $I(R,M)$ is of the type $(e,0)$, where $e^{2}=e$, as well as that each unit of $I(R,M)$ is of the type $(u,m)$, where $u\in U(R)$, $m\in M$.

Now, choosing $(a,m)\in I(R,M)$, where $(a,m)\not\in U(I(R,M))$, we observe that $a\not\in U(R)$. Thus, by hypothesis, the element $a$ is USC, and so $(a,m)$ is a USC element in $I(R,M)$, because for otherwise, if $(a,m)$ has two different strongly clean decompositions, then arguing as in the proof of Proposition \ref{prop2.22}, the element $a$ has two different strongly clean decompositions which is false. The assertion sustained.
\end{proof}

\begin{proposition}\label{2}
Let $R$ be a GUSC ring. Then, for any $n>2$, there does not exist $0\neq e\in Id(R)$ such that $eRe\cong M_{n}(S)$ for some ring $S$.
\end{proposition}

\begin{proof}
Let us assume the contrary, namely that there exists $0\neq e\in Id(R)$ such that $eRe\cong M_{n}(S)$ for some ring $S$. But, since by assumption $R$ is GUSC, it follows from Proposition \ref{1} that the corner subring $eRe$ is GUSC too, and hence $M_{n}(S)$ is GUSC as well. This, however, contradicts Theorem \ref{3}, sustaining our statement.
\end{proof}

Recall that a set $\{e_{ij} : 1 \le i, j \le n\}$ of non-zero elements of $R$ is said to be a {\it system of $n^2$ matrix units}, provided that $e_{ij}e_{st} = \delta_{js}e_{it}$, where $\delta_{jj} = 1$ and $\delta_{js} = 0$ for $j \neq s$. In this case, $e := \sum_{i=1}^{n} e_{ii}$ is an idempotent of $R$ and $eRe \cong M_n(S)$, where $$S = \{r \in eRe : re_{ij} = e_{ij}r, \text{for all} i, j = 1, 2, . . . , n\}.$$
Remember also that a ring $R$ is said to be {\it Dedekind-finite} if $ab=1$ insures $ba=1$ for any $a,b\in R$. In other words, all one-sided units in the ring are two-sided.

\medskip

What we can said now is the following observation.

\begin{proposition}\label{pro3.32}
Every GUSC ring is Dedekind-finite.
\end{proposition}

\begin{proof}
Suppose $R$ is a GUSC ring. If we assume the contrary, namely that $R$ is not a Dedekind-finite ring, then there exist two elements $a, b \in R$ such that $ab = 1$ but $ba \neq 1$. Setting $e_{ij} := a^i(1-ba)b^j$ and
$e :=\sum_{i=1}^{n}e_{ii}$, a routine verification guarantees that there exists a non-zero ring $S$ such that $eRe \cong M_n(S)$. However, Proposition \ref{1} informs us that the corner $eRe$ is a GUSC ring, so that $M_n(S)$ has also be a GUSC ring, contradicting Theorem \ref{3}. This substantiates our claim.
\end{proof}	


Let $Nil_{*}(R)$ denote the {\it prime radical} (or, in other words, the {\it lower nil-radical}) of a ring $R$, i.e., the intersection of all prime ideals of $R$. We know that $Nil_{*}(R)$ is a nil-ideal of $R$.

Besides, it is also long known that a ring $R$ is {\it $2$-primal} if $Nil_{*}(R)$ consists precisely of all the nilpotent elements of $R$, that is, $Nil_{*}(R)=Nil(R)$. For instance, it is well known that reduced rings and commutative rings are both $2$-primal.

\medskip

Our next main result states thus.

\begin{theorem}\label{thm3.12}
Let $R$ be a $2$-primal, local and USC ring. Then, $M_2(R)$ is a GUSC ring.
\end{theorem}

\begin{proof}
Since $R$ is simultaneously local and USC, we can write that $R/J(R) \cong \mathbb{Z}_2$, so Example \ref{exmnew} suggests us that $M_2(R/J(R))$ is a GUSC ring.

On the other hand, since $R$ is both $2$-primal and USC, we may write $J(R) = Nil(R) = Nil_*(R)$, extracting that
$$M_2(R/J(R)) = M_2(R)/M_2(J(R)) = M_2(R)/M_2(Nil_*(R)) = M_2(R)/Nil_*(M_2(R)).$$
Moreover, as $Nil_*(M_2(R))$ is a nil-ideal, we discover that $M_2(R)$ is a GUSC ring, as inferred.
\end{proof}

In the above theorem, the condition of being local is {\it not} redundant. In order to demonstrate that, supposing $R = \mathbb{Z}_2 \times \mathbb{Z}_2$, then $R$ is $2$-primal and USC, but definitely $M_2(R)$ is not a GUSC ring as a plain check shows.

In the same vein, it is {\it not} possible to interchange the condition of being USC with GUSC in the last theorem. Indeed, to illustrate that such a replacement really cannot be happen, assuming $R = \mathbb{Z}_3$, then $R$ is a $2$-primal, GUSC and local ring, but an easy verification gives that $M_2(R)$ is {\it not} a GUSC ring, as suspected.

\medskip

The next assertion somewhat sounds strange, but is true.

\begin{lemma}\label{lem3.13}
Let $M_2(R)$ be a GUSC ring. Then, $R$ is a USC ring.
\end{lemma}

\begin{proof}
Given $a \in R$, then one calculates about the matrix $$A=\begin{pmatrix}
		a & 0 \\ 0 & 0
\end{pmatrix} \not\in U(M_2(R)).$$ Thus, $A$ is a USC element in $M_2(R)$. Therefore, one concludes that $a$ is a USC element in $R$, as expected.
\end{proof}

Our further principal result sounds like this.

\begin{theorem}\label{lem3.15}
Let $R$ be a ring. Then, the following two points are equivalent for a semi-simple ring $R$:
\\
(i) $R$ is a GUSC ring.
\\
(ii) $R$ is either a local ring, or a USC ring, or $R=M_2(\mathbb{Z}_2)$.
\end{theorem}

\begin{proof}
(i) $\Rightarrow$ (ii). Since $R$ is a semi-simple ring, we may write $R = \prod_{i=1}^{m} M_{n_i}(D_i)$, where each component $M_{n_i}(D_i)$ is a matrix ring over a division ring $D_i$.

If, for a moment, $m=1$, then combining Example \ref{exmnew} and Theorem \ref{thm3.12}, we detect that either $R = D_1$ or $R = M_2(\mathbb{Z}_2)$.
	
If, however, $m>1$, then employing Proposition \ref{lem2.3}, each of the rings $M_n(D)$ must be USC, which in turn means that, for any $i$, the equality $M_{n_i}(D_i) = \mathbb{Z}_2$ holds.
\\
(ii) $\Rightarrow$ (i). If $R$ is a USC ring, then it is apparent that $R$ is GUSC. If now $R$ is local, then $R$ has to be GUSC, as formulated.
\end{proof}

\begin{corollary}\label{pronew}
Let $R$ be a ring. Then, the following two issues are equivalent for an abelian ring $R$:
\\
(i) $R$ is a GUSC ring.
\\
(ii) $R$ is either a local, or a USC ring.
\end{corollary}

Furthermore, we recall that a group $G$ is a {\it $p$-group} if each element of $G$ has order which is a power of $p$, where $p$ is an arbitrary prime. A group $G$ is called {\it locally finite} if every finitely generated subgroup is finite.

As usual, for an arbitrary ring $R$ and an arbitrary group $G$, the symbol $RG$ designates the {\it group ring} of $G$ over $R$.

\medskip

Two more statements occurred.

\begin{proposition}\label{exam3.5}
Let $C_{n}$ denote the cyclic group of order $n$. If $n\geq 3$ is odd and $R$ is a boolean ring, then $RC_{n}$ is {\it not} a GUSC ring.
\end{proposition}

\begin{proof}
According to \cite [Example 4.7]{1}, we have $1+g^{n-1}\notin U(RG)$. Also, we see that $g^{n-1}, x+g^{n-1}\in U(RG)$, where $$x := g + g^2 + \cdots + g^{n-1}$$ and $$1+g + g^2 + \cdots + g^{n-1}$$ is an idempotent. So, one writes that $$1+g^{n-1}=(1+g + g^2 + \cdots + g^{n-1})+(x+g^{n-1})$$ are two strongly clean decompositions for $1+g^{n-1}$ leading to a contradiction, as required.
\end{proof}

In closing, we formulate the following.

\begin{proposition}
Let $R$ be a ring in which $2$ is invertible, and let $G=\{1, g\}$ be a group. Then, $RG$ is GUSC if, and only if, $R$ is USC.
\end{proposition}

\begin{proof}
By hypothesis, we easily have $RG\cong R\times R$ via the map $a+bg \leftrightarrow (a+b, a-b)$. Letting $R$ be USC, we then see that $R\times R$ is too USC in conjunction with \cite[Example 2]{2}. Hence, $RG$ is USC and so it is GUSC.

Conversely, let $RG$ be GUSC. Then, the direct product $R\times R$ is GUSC, and hence a consultation with Proposition~\ref{prop2.5} is a guarantor that $R$ is USC.
\end{proof}

\section{Open Questions}

We close our current work with the following two challenging problems of some interest and importance.

\begin{problem}
If $R$ is a semi-potent ring, does it follows that $R$ is GUSC if, and only if, $R$ is GUC?
\end{problem}

\begin{problem}
Suppose that $R$ is an arbitrary ring. Find a suitable criterion in terms of the former ring $R$ and its divisions when the power series ring $R[[x]]$ is GUSC.
\end{problem}

\bigskip

\noindent{\bf Declarations:} The authors declare {\bf no} any conflict of interests as well as {\bf no} data was used while preparing and writing the current manuscript.

\vskip3.0pc

\end{document}